\newdefinition{rem}{Remark}[section]
\newdefinition{theorem}{Theorem}[section]
\newdefinition{corollary}{Corollary}[section]
\newdefinition{definition}{Definition}[section]
\newdefinition{lemma}{Lemma}[section]
\newdefinition{prop}{Proposition}[section]
\numberwithin{equation}{section}
\begin{document}
\begin{frontmatter}

\title{Some exceptional sets of Borel-Bernstein Theorem in continued fractions}

\author[a]{Lulu Fang}\ead{fanglulu1230@gmail.com}
\author[b]{Jihua Ma}\ead {jhma@whu.edu.cn}
\author[b,c]{Kunkun Song}\ead{songkunkun@whu.edu.cn}
\address[a]{School of Mathematics, Sun Yat-sen University, Guangzhou, 510275, P.R.China}
\address[b]{School of Mathematics and Statistics, Wuhan University, Wuhan, 430072, P.R.China}
\address[c]{Universit\'{e} Paris-Est, LAMA (UMR 8050), UPEMLV, UPEC, CNRS, F-94010, Cr\'{e}teil, France}

\begin{abstract}\par
Let $[a_1(x),a_2(x), a_3(x),\cdots]$ denote the continued fraction expansion of a real number $x \in [0,1)$.
This paper is concerned  with certain exceptional sets of the Borel-Bernstein Theorem on the growth rate of $\{a_n(x)\}_{n\geq1}$. As a main result, the Hausdorff dimension of the set
\[
E_{\sup}(\psi)=\left\{x\in[0,1):\ \limsup\limits_{n\to\infty}\frac{\log a_n(x)}{\psi(n)}=1\right\}
\]
 is determined, where $\psi:\mathbb{N}\rightarrow\mathbb{R}^+$ tends to infinity  as $n\to\infty$.
\end{abstract}

\begin{keyword}
Continued fractions\sep partial quotients\sep Hausdorff dimension
\MSC[2010] 11K50\sep 28A80
\end{keyword}

\end{frontmatter}

\section{Introduction}
It is well known that each real number $x \in [0,1)$ admits a \emph{continued fraction expansion} of the form
\begin{equation}\label{cfe}
x = \dfrac{1}{a_1(x) +\dfrac{1}{a_2(x)  +\dfrac{1}{a_3(x)+\ddots}}}
\end{equation}
where $a_1(x),a_2(x),a_3(x),\cdots$ are positive integers, called the \emph{partial quotients} of the continued fraction expansion of $x$. For simplicity, we denote \eqref{cfe} by $x=[a_1(x), a_2(x), a_3(x),\cdots]$. Such an $x$ is irrational if and only if \eqref{cfe} is infinite. For more details about continued fractions, see \cite{IK02, Khi64}.\\
\indent Continued fractions are closely allied to the theory of Diophantine approximation.
The classical theorem of Dirichlet implies that for any real number $x$, there exist infinitely many ``good" rational approximates $p/q\ (q>0)$ such that
$\left|x-p/q\right| < 1/q^{2}$. Continued fractions provide a simple mechanism for generating these rational approximates.
On the contrary, there are some irrationals, called \emph{badly approximable numbers}, such that $\left|x-p/q\right| \geq c_x/q^{2}$ holds for all rationals $p/q\ (q>0)$, where $c_x$ is a positive constant depending only on $x$. Via continued fractions, badly approximable numbers have a beautiful characterisation: a number is badly approximable if and only if its partial quotients are bounded.
Let us recall Liouville's theorem on Diophantine approximation  which implies the transcendence of real numbers with rapidly increasing partial quotients.
All the above results lead to the study of
the growth rate of partial quotients.\\
\indent Borel \cite{BE1909} proved that for Lebesgue almost all $x \in [0,1)$, $\limsup_{n \to \infty} a_n(x) =\infty$. Equivalently,
all badly approximable numbers form a null set with respect to Lebesgue measure.
In fact, a more precise result is the  Borel-Bernstein Theorem (``0--1" law), see \cite{BF1912,BE1909,BE1912}, which
asserts that for Lebesgue almost all $x \in [0,1)$, $a_n(x) \geq \psi(n)$ holds for infinitely many $n'$s or finitely many $n'$s depending on whether $\sum_{n \geq 1} 1/\psi(n)$ diverges or converges.
This naturally leads one to investigate the sizes of the exceptional sets related to  the growth rate of partial quotients in the sense of Hausdorff dimension. The first published work was due to Jarn\'{\i}k \cite{Jar28} who proved that the set of real numbers whose partial quotients are bounded has full Hausdorff dimension.
Later on, Good \cite{Good41} showed that the set of $x\in [0,1)$ in which $a_n(x) \to \infty$ as $n \to \infty$ is of Hausdorff dimension $1/2$. After that, there are many papers studying
the growth rate of partial quotients from various aspects (e.g., sum, maxima, etc), see for example, Hirst \cite{lesHir73}, Ramharter \cite{R}, Cusick \cite{Cus90}, {\L}uczak \cite{lesLuc97}, Wang and Wu \cite{lesWW08A, WW08}, Wu and Xu \cite{WX09}, Xu \cite{XU08}, Jordan and Rams \cite{lesJR12}, Fan, Liao, Wang and Wu \cite{FLWW09, FLWW13}, Cao, Wang and Wu \cite{CWW}, Liao and Rams \cite{LR16,LR161}, Fang and Song  \cite{FS16,FS19}.

 As a consequence of the Borel-Bernstein Theorem, for Lebesgue almost all $x \in [0,1)$,
\begin{equation}\label{limsupan}
\limsup\limits_{n\to\infty}\frac{\log a_n(x)}{\log n}=1.
\end{equation}
This means that for almost all $x\in[0,1)$, there exists a subsequence of partial quotients tends to infinity nearly with a linear speed.
This paper is concerned with Hausdorff dimension of some exceptional sets of \eqref{limsupan}.
 More precisely, we consider dimensions of the set
\[E_{\sup}(\psi)=\left\{x\in[0,1):\ \limsup\limits_{n\to\infty}\frac{\log a_n(x)}{\psi(n)}=1\right\},\]
where $\psi:\mathbb{N}\rightarrow\mathbb{R}^+$ is a function satisfying $\psi(n)\to\infty$ as $n\to\infty$. In the sequel, we use the notation $\dim_{\rm H}$ to denote the Hausdorff dimension. Our main results are as follows.
%

\begin{theorem}\label{zdl}
Let $\psi:\mathbb{N}\rightarrow\mathbb{R}^+$ be a function satisfying $\psi(n)\to\infty$ as $n\to\infty$. We have
\begin{enumerate}[(i)]
\item if $\psi(n)/n\to0$ as $n\to\infty$, then $\dim_{\rm H}E_{\sup}(\psi)=1$,
\item if $\psi(n)/n\to\alpha\ (0<\alpha<\infty)$ as $n\to\infty$, then $\dim_{\rm H}E_{\sup}(\psi)=S(\alpha)$,
\item if $\psi(n)/n\to\infty$ as $n\to\infty$, then $\dim_{\rm H}E_{\sup}(\psi)=\frac{1}{A+1}$,
\end{enumerate}
where $A$ is given by
\begin{equation}\label{phitj}
\log A=\liminf\limits_{n\to\infty}\frac{\log\psi(n)}{n},
\end{equation}
and $S:\mathbb{R}^+\rightarrow[\frac{1}{2},1]$ is a decreasing continuous function satisfying
\[\lim\limits_{\alpha\to0}S(\alpha)=1\ \ \text{and}\ \ \lim\limits_{\alpha\to\infty}S(\alpha)=\frac{1}{2}.\]
\end{theorem}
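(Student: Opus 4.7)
The plan is to handle the three cases separately, uniformly using the decomposition
\[
E_{\sup}(\psi)=\bigcap_{\varepsilon>0}\Bigl(\bigl\{a_n(x)\le e^{(1+\varepsilon)\psi(n)}\text{ eventually}\bigr\}\cap\bigl\{a_n(x)\ge e^{(1-\varepsilon)\psi(n)}\text{ i.o.}\bigr\}\Bigr).
\]
For the upper bound I will use one of the two standard limsup-type inclusions together with the classical cylinder-length estimate $|I_n(a_1,\ldots,a_n)|\asymp q_n^{-2}\asymp(a_1\cdots a_n)^{-2}$. For the lower bound I will construct Cantor-type subsets of $E_{\sup}(\psi)$ and apply the mass distribution principle with a suitable Bernoulli- or Gibbs-type measure.

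In \emph{case (i)}, the upper bound is trivial. For the lower bound I fix $M\ge 2$ and a very sparse sequence $n_k\uparrow\infty$, then consider those $x$ for which $a_n(x)\le M$ whenever $n\notin\{n_k\}$ and $a_{n_k}(x)=\lceil e^{\psi(n_k)}\rceil$. Such $x$ lies in $E_{\sup}(\psi)$ because on the sparse indices $\log a_{n_k}/\psi(n_k)\to 1$, while on the complement $\log a_n/\psi(n)\le\log M/\psi(n)\to 0$. The hypothesis $\psi(n)=o(n)$ ensures that the prescribed large digits contribute only an $o(n_k)$ correction in the cylinder-length product, so the dimension of this Cantor subset coincides with that of $\{a_n\le M\text{ eventually}\}$; letting $M\to\infty$ and invoking the classical fact (Jarn\'{\i}k, Good, Hensley) that this dimension tends to $1$ concludes the case.

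In \emph{case (iii)}, the upper bound follows from $E_{\sup}(\psi)\subset\{a_n(x)\ge e^{(1-\varepsilon)\psi(n)}\text{ i.o.}\}$ for every $\varepsilon>0$: a Wang-Wu-type covering argument (classify cylinders by the position and size of the single large partial quotient and sum cylinder lengths raised to the $s$-th power) combined with the definition $\log A=\liminf\log\psi(n)/n$ shows the $s$-Hausdorff sum converges for every $s>1/(A+1)$. For the lower bound I build a Cantor subset by selecting a sub-sequence $n_k$ along which $\log\psi(n_k)/n_k\to\log A$, prescribing $a_{n_k}(x)$ to lie in a narrow window around $e^{\psi(n_k)}$, and taking $a_n(x)\in\{1,2\}$ elsewhere; a standard mass distribution argument on this symbolic set yields the matching lower bound $1/(A+1)$.

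Case (ii), where $\psi(n)\sim\alpha n$, is the most delicate and presents the main obstacle. Here $E_{\sup}(\psi)$ coincides with $\{x:\limsup\log a_n(x)/n=\alpha\}$, whose dimension is governed by a pressure equation for the Gauss map $T$ in the spirit of Fan-Liao-Wang-Wu and Liao-Rams; $S(\alpha)$ is the unique $s\in[1/2,1]$ solving the equation defining a zero of the relevant pressure function subject to a Lyapunov-rate constraint determined by $\alpha$. Decreasingness, continuity, and the limiting values $S(0^+)=1$ and $S(\infty)=1/2$ follow from real-analyticity and standard convex-analytic properties of topological pressure for the Gauss system. A key technical subtlety throughout, and especially in case (ii), is to ensure that the prescribed digit patterns realise the \emph{equality} $\limsup=1$ and not merely $\limsup\le 1$ or $\limsup\ge 1$; this is what distinguishes $E_{\sup}(\psi)$ from the classical one-sided limsup sets of Borel-Bernstein type and forces a delicate coupling between the sparse large-digit pattern and the Gibbs-type measure used for the lower bound.
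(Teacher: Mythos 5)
Your cases (i) and (ii) follow essentially the paper's route (sparsely planted large digits over a bounded-digit background, a H\"older comparison with $E_M$, and the reduction of (ii) to the Wang--Wu set $\{x:\limsup_n\log a_n(x)/n=\alpha\}$ whose dimension $S(\alpha)$ and its continuity are quoted), so I focus on case (iii), where your lower bound has a genuine gap. You plant $a_{n_k}(x)$ of size about $e^{\psi(n_k)}$ along a subsequence $n_k$ realising the $\liminf$ in \eqref{phitj} and take $a_n(x)\in\{1,2\}$ elsewhere. The dimension of such a set is given exactly by Lemma \ref{flww} (it is an equality, so no choice of mass distribution can do better), namely
\[
\liminf_{n\to\infty}\frac{\log(t_1\cdots t_n)}{2\log(t_1\cdots t_n)+\log t_{n+1}},
\]
and at $n=n_{k+1}-1$ the numerator is $O(n_{k+1})+\sum_{j\le k}\psi(n_j)$ while the last term in the denominator is $\psi(n_{k+1})$. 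Since $\psi(n)/n\to\infty$, the ratio is governed by $\psi(n_{k+1})/\sum_{j\le k}\psi(n_j)$, which blows up for any genuinely sparse $n_k$: already for $\psi(n)=A^n$ with gaps $n_{k+1}-n_k\to\infty$ one gets $\psi(n_{k+1})/\sum_{j\le k}\psi(n_j)\gtrsim A^{\,n_{k+1}-n_k}\to\infty$, so your Cantor set has Hausdorff dimension $0$, not $1/(A+1)$. Sparseness, which is harmless in cases (i) and (ii), is fatal here: to reach $1/(A+1)$ one must plant digits of comparable logarithmic size at \emph{every} index.

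One cannot simply take $a_n(x)$ of size $e^{\psi(n)}$ for all $n$ either, because $\liminf_n\log\psi(n)/n=\log A$ allows $\psi$ to oscillate far above $A^n$ on a subsequence (e.g.\ $\psi(n)=A^{n^2}$ for odd $n$ and $A^n$ for even $n$), which again drives the ratio in Lemma \ref{flww} to infinity. The missing idea --- and the heart of the paper's argument --- is a capped monotone surrogate for $\psi$: set $\theta(n)=\min_{k\ge n}\psi(k)$, which satisfies $\theta\le\psi$ and $\theta(n)=\psi(n)$ for infinitely many $n$, and define recursively $d_1=\exp\theta(1)$ and $d_n=\min\bigl\{\exp\theta(n),\ \prod_{k<n}d_k^{A-1+\varepsilon}\bigr\}$. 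The cap forces $\log d_{n+1}\le(A-1+\varepsilon)\sum_{k\le n}\log d_k$, so Lemma \ref{flww} applied to $\{x: M\lfloor d_n\rfloor\le a_n(x)<2M\lfloor d_n\rfloor,\ \forall n\ge 1\}$ yields dimension at least $1/(A+\varepsilon+1)$; a separate argument using the fact that $A$ is defined via a $\liminf$ shows the cap is inactive infinitely often, so $d_n=\exp\psi(n)$ infinitely often and the set lies in $E_{\sup}(\psi)$, while $d_n\le\exp\psi(n)$ everywhere keeps the $\limsup$ from exceeding $1$. Without this capping device your construction either fails the dimension count or fails membership in $E_{\sup}(\psi)$.
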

\begin{rem}\label{sb}
We do not have an explicit formula for $S(\alpha)$. In fact, it is exactly
\[S(\alpha)=\dim_{\rm H}\{x\in[0,1):\ a_n(x)\geq(\exp(\alpha))^{n}\ \text{for infinity many}\ n\}.\]
For more details, see Lemma 2.6 and Theorem 3.1 in Wang and Wu \cite{WW08}.
\end{rem}

Besides the set $E_{\sup}(\psi)$, we are also interested in the Hausdorff dimension of the sets
\[E(\psi)=\left\{x\in[0,1): \lim\limits_{n\to\infty}\frac{\log a_n(x)}{\psi(n)}=1\right\}\ \text{and}\ E_{\inf}(\psi)=\left\{x\in[0,1):\ \liminf\limits_{n\to\infty}\frac{\log a_n(x)}{\psi(n)}=1\right\}.\]

\begin{theorem}\label{jxybqk}
Let $\psi:\mathbb{N}\rightarrow\mathbb{R}^+$ be a function satisfying $\psi(n)\to\infty$ as $n\to\infty$.
Then
\begin{equation*}
\dim_{\rm H}E(\psi)=
\frac{1}{2+\xi},
\end{equation*}
where $\xi$ is defined as
\[
\xi:=\limsup\limits_{n\to\infty}\frac{\psi(n+1)}{\psi(1)+\cdots+\psi(n)}.
\]

\end{theorem}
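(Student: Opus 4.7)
The plan is to establish matching upper and lower bounds on $\dim_{\rm H} E(\psi)$ by a covering argument and a Cantor-type construction respectively, writing $T_n := \psi(1) + \cdots + \psi(n)$ throughout, and noting that $T_n \to \infty$ since $\psi(n) \to \infty$. For the upper bound, fix $\epsilon > 0$ and decompose $E(\psi) \subseteq \bigcup_{N \geq 1} F_N$, where
\[
F_N = \bigl\{x : e^{(1-\epsilon)\psi(n)} \leq a_n(x) \leq e^{(1+\epsilon)\psi(n)} \text{ for all } n \geq N\bigr\}.
\]
I would cover $F_N$ at each level $n$ by ``truncated cylinders'' $J_n(a_1,\ldots,a_n)$ consisting of those $x$ in the cylinder $I_n(a_1,\ldots,a_n)$ for which $a_{n+1}$ also lies in its admissible range. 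Standard continued-fraction estimates give $|J_n| \asymp |I_n|/e^{(1-\epsilon)\psi(n+1)} \asymp e^{-2(1-\epsilon)T_n - (1-\epsilon)\psi(n+1)}$, and the number of admissible $n$-tuples is at most $e^{(1+\epsilon)T_n}$. Choosing a subsequence $n_k$ along which $\psi(n_k+1)/T_{n_k} \to \xi$, the $s$-Hausdorff sum of this cover is bounded by $\exp\bigl((1+\epsilon)T_{n_k} - s(1-\epsilon)(2T_{n_k} + \psi(n_k+1))\bigr)$, which tends to $0$ whenever $s > (1+\epsilon)/\bigl((1-\epsilon)(2+\xi)\bigr)$. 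Sending $\epsilon \to 0$ then gives $\dim_{\rm H} E(\psi) \leq 1/(2+\xi)$.

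For the lower bound, I set $N_n = \lceil e^{\psi(n)} \rceil$ and consider the Cantor-like set $G = \{x : N_n \leq a_n(x) \leq 2N_n \text{ for all } n \geq 1\}$. Since $\log a_n(x) = \psi(n) + O(1)$ on $G$, one has $G \subseteq E(\psi)$. Endow $G$ with the natural Bernoulli-type probability measure $\mu$ assigning equal mass $\asymp e^{-\psi(n)}$ to each admissible choice of $a_n$, so that $\mu(I_n) \asymp e^{-T_n}$ while $|I_n| \asymp e^{-2T_n}$. The crux is the mass-distribution estimate
\[
\frac{\mu(B(x,r))}{r^s} \lesssim \exp\bigl((2s-1)T_n + s\psi(n+1)\bigr)
\]
at the critical intermediate scale $r \asymp |I_n|/N_{n+1}$; for any $\delta > 0$ the choice $s = 1/(2+\xi+\delta)$ keeps the right-hand side bounded uniformly in $n$ large, precisely because $\limsup \psi(n+1)/T_n = \xi$. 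Combined with analogous but easier bounds at the cylinder scales $r \asymp |I_n|$ and sub-cylinder scales $r \asymp |I_{n+1}|$, the mass distribution principle yields $\dim_{\rm H} G \geq 1/(2+\xi+\delta)$; letting $\delta \to 0$ completes the argument.

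The principal obstacle I anticipate is the mass-distribution computation at intermediate scales $r$ lying strictly between consecutive cylinder sizes $|I_{n+1}|$ and $|I_n|$. The worst-case scale is $r \asymp |I_n|/N_{n+1}$, where the ball just engulfs the sub-region of $I_n$ containing all admissible level-$(n+1)$ sub-cylinders; pinning this down requires the fine geometry of continued-fraction cylinders, namely that the $N_{n+1}$ admissible sub-cylinders inside $I_n$ sit in an interval of length $\asymp |I_n|/N_{n+1}$ and are spaced $\asymp |I_n|/N_{n+1}^2$ apart. A delicate point is that $\xi$ is a $\limsup$ rather than a $\sup$ of $\psi(n+1)/T_n$; this is accommodated by observing that the mass-distribution inequality need only hold for $n$ sufficiently large, with the finitely many small-$n$ contributions absorbed into an implicit multiplicative constant.
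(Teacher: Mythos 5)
Your proposal is correct and follows essentially the same route as the paper: the upper bound via the decomposition $E(\psi)\subseteq\bigcup_N F_N$, the count of admissible words, and the fundamental-interval cover is the paper's argument almost verbatim, and your lower-bound Cantor set $G=\{x: N_n\leq a_n(x)\leq 2N_n\}$ is the paper's set $\mathcal{E}(\psi)$ up to a harmless constant factor ensuring the digits are at least $3$. The only difference is that where the paper simply invokes Lemma \ref{flww} (from Fan--Liao--Wang--Wu) to read off $\dim_{\rm H}G$, you sketch the underlying mass-distribution argument at the critical intermediate scale $r\asymp |I_n|/N_{n+1}$ yourself, which is precisely how that lemma is proved.
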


\begin{theorem}\label{xjx}
Let $\psi:\mathbb{N}\rightarrow\mathbb{R^{+}}$ be a function satisfying $\psi(n)\to\infty$ as $n\to\infty$. Then
\[
\dim_{\rm H}E_{\inf}(\psi)=\frac{1}{B+1},
 \]
 where $B$ is given by
 \[
\log B:=\limsup\limits_{n\to\infty}\frac{\log\psi(n)}{n}.
 \]
\end{theorem}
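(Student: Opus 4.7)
I would establish matching bounds $\dim_{\rm H} E_{\inf}(\psi) \leq 1/(B+1)$ and $\dim_{\rm H} E_{\inf}(\psi) \geq 1/(B+1)$ via a covering-plus-Cantor-subset strategy.

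\emph{Upper bound.} For any $\epsilon > 0$, the condition $\liminf_n \log a_n(x)/\psi(n) = 1$ forces $a_n(x) \geq \exp((1-\epsilon)\psi(n))$ for all sufficiently large $n$. Combined with the existence, for every $\delta > 0$, of an infinite subsequence $\{n_k\}$ with $\psi(n_k) \geq (Be^{-\delta})^{n_k}$ (coming directly from the definition of $B$), this gives
\[E_{\inf}(\psi) \;\subseteq\; \bigl\{x \in [0,1) : a_{n_k}(x) \geq c^{b^{n_k}} \text{ for all large } k\bigr\} \;\subseteq\; \bigl\{x \in [0,1) : a_n(x) \geq c^{b^n} \text{ i.o.}\bigr\}\]
with $b = Be^{-\delta}$ and $c = e^{1-\epsilon}$. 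The rightmost set has Hausdorff dimension $1/(b+1)$ by the classical theorem of {\L}uczak \cite{lesLuc97}. Sending $\delta, \epsilon \to 0$ yields $\dim_{\rm H} E_{\inf}(\psi) \leq 1/(B+1)$.

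\emph{Lower bound.} I would construct a Cantor subset $F \subseteq E_{\inf}(\psi)$ of Hausdorff dimension arbitrarily close to $1/(B+1)$. Fix $\delta > 0$ small and a sequence $\eta_n \downarrow 0$, and extract a sparse subsequence $\{n_k\}$ (say, with $n_{k+1} \geq 2 n_k$) along which $\psi(n_k) \geq (Be^{-\delta})^{n_k}$. Set
\[T_n = \bigl\lceil \exp\bigl((1+\eta_n)\max(\psi(n),\,(Be^{-\delta})^n)\bigr)\bigr\rceil,
\qquad F = \bigl\{x \in [0,1) : a_n(x) \in [T_n,\,2T_n] \text{ for every } n \geq 1\bigr\}.\]
For $x \in F$, the inequality $\log a_n(x) \geq (1+\eta_n)\psi(n)$ immediately gives $\liminf_n \log a_n(x)/\psi(n) \geq 1$, while along the peak subsequence the choice of $T_{n_k}$ forces $\log a_{n_k}(x)/\psi(n_k) \to 1$, hence $\liminf \leq 1$. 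Thus $F \subseteq E_{\inf}(\psi)$. Since $T_n$ grows essentially like $\exp((Be^{-\delta})^n)$ (doubly exponential with outer rate $Be^{-\delta}$, the rare correction at the sparse indices $\{n_k\}$ being negligible), a mass distribution principle argument analogous to {\L}uczak's yields $\dim_{\rm H} F \geq 1/(Be^{-\delta}+1)$. Letting $\delta \to 0$ completes the proof.

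\emph{Main obstacle.} While the upper bound reduces neatly to the {\L}uczak dimension formula, the lower-bound Cantor construction is delicate. One must choose $T_n$ so that simultaneously (i) $\log a_n \geq \psi(n)$ at \emph{every} index (securing $\liminf \geq 1$), (ii) $\log a_{n_k}/\psi(n_k) \to 1$ along the peak subsequence (securing $\liminf \leq 1$), and (iii) the cylinder-size pattern is compatible with the target dimension $1/(B+1)$. Sparsity of $\{n_k\}$ is essential for (iii): without it, the potentially very large values $T_{n_k} \leq \exp((Be^{\delta})^{n_k})$ would dominate $\log Q_n = \sum_{i \leq n} \log T_i$ and corrupt the asymptotic ratio $\log T_n/\log Q_{n-1}$ underlying the mass-distribution estimate.
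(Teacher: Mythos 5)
Your upper bound is essentially sound and is more self-contained than the paper's (which simply cites an earlier preprint): the inclusion of $E_{\inf}(\psi)$ into $\{x: a_n(x)\geq c^{b^n}\ \text{i.o.}\}$ with $b=Be^{-\delta}$, $c=e^{1-\epsilon}$ and an appeal to {\L}uczak's theorem is correct when $B>1$; you only need to treat $B=1$ separately (then $b<1$ and {\L}uczak's theorem does not apply, but Good's theorem on $\{x: a_n(x)\to\infty\}$ already gives the bound $1/2$), and $B=\infty$ by letting $b\to\infty$.

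The lower bound, however, has a genuine gap, located exactly where you flag the ``main obstacle''. The choice $T_n=\lceil\exp((1+\eta_n)\max(\psi(n),(Be^{-\delta})^n))\rceil$ does put $F$ inside $E_{\inf}(\psi)$, but it does not control $\limsup_n \log T_{n+1}/\sum_{i\le n}\log T_i$, which is what Lemma \ref{flww} requires. The difficulty is not the sparsity of your chosen subsequence $\{n_k\}$: since $T_n\ge\exp(\psi(n))$ must hold at \emph{every} index to secure $\liminf\ge1$, the relevant peaks are those of $\psi$ itself, and their locations are not under your control. Concretely, take $B=2$ with $\psi(n)=2^n$ when $n$ is a power of $2$ and $\psi(n)=n$ otherwise; then for $n+1=2^m$ one has $\log T_{n+1}\asymp 2^{n+1}$ while $\sum_{i\le n}\log T_i\asymp (2e^{-\delta})^{n}$, so the ratio is of order $e^{\delta n}\to\infty$ and Lemma \ref{flww} yields $\dim_{\rm H}F=0$, not $1/(Be^{-\delta}+1)$. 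The paper circumvents this by regularizing $\psi$ \emph{backward in time}: it sets $L_j=\sup_{n\ge j}\exp\left(\psi(n)(B+\varepsilon)^{j-n}\right)$, so that each future peak of $\psi$ pre-inflates all earlier $L_j$ geometrically. This forces $L_{j+1}\le L_j^{B+\varepsilon}$, hence $\log L_{n+1}-\log L_1\le(B+\varepsilon-1)\sum_{j\le n}\log L_j$, which is precisely the control needed in Lemma \ref{flww}; at the same time $L_j\ge\exp(\psi(j))$ for every $j$ and $L_j=\exp(\psi(j))$ along a subsequence, so $\liminf_n\log L_n/\psi(n)=1$ and the resulting Cantor set still lies in $E_{\inf}(\psi)$. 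Some such anticipatory smoothing of $\psi$ is the missing idea in your construction.
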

\begin{rem}\label{xd}
Under the condition that $\psi(n)\to \infty$ as $n \to \infty$, we  always have
\[\dim_{\rm H}E(\psi)\leq\dim_{\rm H}E_{\inf}(\psi)\leq\dim_{\rm H}E_{\sup}(\psi).\]
More precisely, we divide it into three parts.
\begin{enumerate}[(i)]
\item $\psi(n)/n\to0$ as $n\to\infty$,
\[\dim_{\rm H}E(\psi)\leq\dim_{\rm H}E_{\inf}(\psi)=\frac{1}{2}<\dim_{\rm H}E_{\sup}(\psi)=1.\]
\item $\psi(n)/n\to \alpha\ (\alpha>0)$ as $n\to\infty$,
\[\dim_{\rm H}E(\psi)=\dim_{\rm H}E_{\inf}(\psi)=\frac{1}{2}\leq\dim_{\rm H}E_{\sup}(\psi)\leq1.\]
\item $\psi(n)/n\to\infty$ as $n\to\infty$,
\[\dim_{\rm H}E(\psi)\leq\dim_{\rm H}E_{\inf}(\psi)\leq\dim_{\rm H}E_{\sup}(\psi)\leq\frac{1}{2}.\]
\end{enumerate}
\end{rem}
\begin{rem}\label{ks}
Let $1<a<b<\infty$ and
\begin{equation*}
\psi(n)=
\begin{cases}
a^n,\ \ \ \ \ \ \text{if}\ \ n\ \text{is even};\cr
b^n,\ \ \ \ \ \ \text{if}\ \ n\ \text{is odd}.
\end{cases}
\end{equation*}
Then $A=a,B=b$ and
\begin{align*}
\xi=&\limsup\limits_{n\to\infty}\frac{\psi(n+1)}{\psi(1)+\cdots+\psi(n)}=
\lim\limits_{k\to\infty}\frac{\psi(2k+1)}{\psi(1)+\cdots+\psi(2k)}\\
=&\lim\limits_{k\to\infty}\frac{b^{2k+1}}{a^{2}+\cdots+a^{2k}+b^{1}+\cdots+b^{2k-1}}
=b^2-1.
\end{align*}
Hence we have
\[\frac{1}{b^2+1}=\dim_{\rm H}E(\psi)<\frac{1}{b+1}=\dim_{\rm H}E_{\inf}(\psi)<\frac{1}{a+1}=\dim_{\rm H}E_{\sup}(\psi).\]
This implies that the dimensional result changes essentially when we replace $\lim$ by $\liminf$ or $\limsup$. The phenomenon also happens in Liao and Rams \cite{LR16}.
\end{rem}
\ \ \ \ We use $\mathbb{N}$ to denote the set of all positive integers, $|\cdot|$ the length of a subset of $[0,1)$, $\exp(x)$ the natural exponential function, $\lfloor x\rfloor$ the largest integer not exceeding $x$ and $\sharp$ the cardinality of a set, respectively.\\
\indent The paper is organized as follows. In section 2, we present some elementary properties and dimensional results in continued fractions. Section 3 is devoted to the proofs of the main results.

\section{Preliminaries}
\subsection{Elementary properties of continued fractions}
 For any $n\geq1$ and $(a_1,\cdots,a_n)\in\mathbb{N}^{n}$, we call
\begin{equation*}
I_{n}(a_1, \cdots, a_n): =\left\{x\in[0,1):\ a_1(x)=a_1, \cdots, a_n(x)=a_n\right\}
\end{equation*}
 a cylinder of order $n$ of continued fractions and denote the $n$-th convergent of the continued fraction expansion of $x$ by
\begin{equation*}
\frac{p_n(x)}{q_n(x)}:=[a_1(x),a_2(x),\cdots,a_n(x)].
\end{equation*}
Notice that all points in $I_{n}(a_1, \cdots, a_n)$ have the same $p_n(x)$ and $q_n(x)$. Thus, we write
\begin{equation*}
p_n(a_1,\cdots,a_n)=p_n=p_n(x)\ \text{and}\ q_n(a_1,\cdots,a_n)=q_n=q_n(x)
\end{equation*}
for $x\in I_{n}(a_1, \cdots, a_n)$.
It is well known (see \cite[p. 4]{Khi64}) that $p_n$ and $q_n$ satisfy the following recursive formula:
\begin{equation}\label{ppqq}
\begin{cases}
p_{-1}=1,\ \ p_0=0,\ \ p_n=a_np_{n-1}+p_{n-2}\ (n\geq1);\cr
q_{-1}=0,\ \ \ q_0=1,\ \ q_n=a_nq_{n-1}+q_{n-2}\ (n\geq1).
\end{cases}
\end{equation}
As consequences, we have the following results.
\begin{lemma}[\cite{Khi64}]\label{fl}
For any $n\geq1$, we have
\[q_n\geq2^{\frac{n-1}{2}}.\]
\end{lemma}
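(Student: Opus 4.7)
The plan is a short induction on $n$ using the recursion \eqref{ppqq}. I would first record the obvious base cases: $q_0 = 1 \geq 2^{-1/2}$ and $q_1 = a_1 \geq 1 = 2^{0}$, which handle $n=1$ (and also provide the starting data $n=0$). Since every partial quotient satisfies $a_n \geq 1$ and all $q_k$ are nonnegative, the recursion \eqref{ppqq} immediately gives the monotonicity $q_{n-1} \geq q_{n-2}$ for $n \geq 2$, as well as the sub-Fibonacci inequality
\[
q_n \;=\; a_n q_{n-1} + q_{n-2} \;\geq\; q_{n-1} + q_{n-2}.
\]

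Combining these two observations yields the doubling bound $q_n \geq 2 q_{n-2}$ for all $n \geq 2$. From here the inductive step is immediate: assuming the claim at index $n-2$, i.e.\ $q_{n-2} \geq 2^{(n-3)/2}$, one obtains
\[
q_n \;\geq\; 2 q_{n-2} \;\geq\; 2 \cdot 2^{(n-3)/2} \;=\; 2^{(n-1)/2},
\]
which closes the induction. The two base cases ($n=0$ and $n=1$) cover the even and odd residues modulo $2$ separately, so the two-step induction reaches every $n \geq 1$.

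There is essentially no obstacle here: the result follows purely from $a_n \geq 1$ and the recursion already stated in \eqref{ppqq}. The only mild care needed is to check that the base cases line up correctly with the exponent $(n-1)/2$, in particular that the seemingly weak bound $q_0 \geq 2^{-1/2}$ is enough to launch the induction through $n=2$. Everything else is a one-line computation, so I would present the argument in a single short paragraph.
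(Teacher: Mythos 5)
Your argument is correct and is essentially the standard proof from Khinchin's book, which the paper simply cites without reproducing: the key chain $q_n = a_nq_{n-1}+q_{n-2} \geq q_{n-1}+q_{n-2}\geq 2q_{n-2}$ followed by a two-step induction is exactly the classical route. The base cases line up as you checked ($q_0=1\geq 2^{-1/2}$ and $q_1=a_1\geq 2^{0}$), so there is nothing to add.
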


\begin{lemma}[\cite{Wu06}]\label{qan}
For any $n\geq1$ and $1\leq k\leq n$,
\[\frac{a_k+1}{2}\leq\frac{q_{n}(a_1,\cdots, a_n)}{q_{n-1}(a_1,\cdots, a_{k-1}, a_{k+1},\cdots,a_n)}\leq a_k+1.\]
\end{lemma}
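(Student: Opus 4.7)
The plan is to apply a continuant splitting identity at position $k$ to both $q_n(a_1,\cdots,a_n)$ and the gapped denominator $q_{n-1}(a_1,\cdots,a_{k-1},a_{k+1},\cdots,a_n)$, and then to read off the two bounds from two elementary algebraic inequalities. First I would introduce the abbreviations
\begin{equation*}
A=q_{n-k}(a_{k+1},\cdots,a_n),\ B=q_{n-k-1}(a_{k+2},\cdots,a_n),\ C=q_{k-1}(a_1,\cdots,a_{k-1}),\ D=q_{k-2}(a_1,\cdots,a_{k-2}),
\end{equation*}
and then, for $2\leq k\leq n-1$, prove by a short induction on $n-k$ using \eqref{ppqq} that
\[
q_n(a_1,\cdots,a_n)=q_k(a_1,\cdots,a_k)\,A+CB=(a_kC+D)A+CB,
\]
and, applying the same identity at position $k-1$ to the gapped sequence,
\[
q_{n-1}(a_1,\cdots,a_{k-1},a_{k+1},\cdots,a_n)=CA+DB.
\]

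From here everything reduces to two elementary comparisons. Monotonicity of the continuant in each argument forces $D\leq C$ and $B\leq A$, so $(C-D)(A-B)\geq 0$, which rearranges to $DA+CB\leq CA+DB$. For the upper bound this gives
\[
q_n(a_1,\cdots,a_n)=a_kCA+(DA+CB)\leq a_kCA+q_{n-1}(a_1,\cdots,a_{k-1},a_{k+1},\cdots,a_n)\leq(a_k+1)\,q_{n-1}(a_1,\cdots,a_{k-1},a_{k+1},\cdots,a_n),
\]
using $CA\leq CA+DB$ in the final step. For the lower bound I would discard $DA\geq 0$ in the numerator and replace $D$ by $C$ in the denominator to obtain
\[
\frac{q_n(a_1,\cdots,a_n)}{q_{n-1}(a_1,\cdots,a_{k-1},a_{k+1},\cdots,a_n)}\geq\frac{a_kA+B}{A+B}\geq\frac{a_k+1}{2},
\]
where the last inequality reduces, after clearing denominators, to $(a_k-1)(A-B)\geq 0$, which holds because $a_k\geq 1$ and $A\geq B$.

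The boundary cases $k\in\{1,n\}$ I would handle directly. For $k=n$, \eqref{ppqq} gives $q_n/q_{n-1}(a_1,\cdots,a_{n-1})=a_n+q_{n-2}/q_{n-1}\in[a_n,a_n+1]$, and $a_n\geq(a_n+1)/2$ since $a_n\geq 1$; the case $k=1$ I would reduce to $k=n$ via the continuant's reversal symmetry $q_j(b_1,\cdots,b_j)=q_j(b_j,\cdots,b_1)$. The main obstacle is recognising which continuant splittings to invoke; once they are written down, both bounds collapse onto the two one-line algebraic inequalities $(C-D)(A-B)\geq 0$ and $(a_k-1)(A-B)\geq 0$.
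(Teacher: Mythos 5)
Your proof is correct. The paper itself offers no proof of this lemma --- it is simply quoted from Wu \cite{Wu06} --- and your argument via Euler's splitting identity for continuants, writing $q_n(a_1,\cdots,a_n)=(a_kC+D)A+CB$ against $CA+DB$ for the gapped denominator and reducing both bounds to $(C-D)(A-B)\geq0$ and $(a_k-1)(A-B)\geq0$, is essentially the standard argument used in the cited source; all steps, including the boundary cases $k\in\{1,n\}$, check out.
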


\begin{prop}[{\cite[p. 18]{IK02}}]\label{cd}
For any $(a_1,\cdots, a_n)\in\mathbb{N}^{n}$, the cylinder $I_{n}(a_1,\cdots, a_n)$ is the interval with the endpoints
$p_n/q_n$ and $(p_n+p_{n-1})/(q_n+q_{n-1})$.
As a result, the length of $I_{n}(a_1, \cdots, a_n)$ equals to
\begin{equation*}
|I_{n}(a_1, \cdots, a_n)|=\frac{1}{q_n(q_n+q_{n-1})}.
\end{equation*}
\end{prop}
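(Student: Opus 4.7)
The plan is to parametrize points in $I_n(a_1,\ldots,a_n)$ via the tail of their continued fraction expansion, and then to push this parametrization through the Möbius transformation determined by $p_n, q_n, p_{n-1}, q_{n-1}$ using the recursion \eqref{ppqq}.

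First, for any $x=[a_1,a_2,a_3,\ldots]\in I_n(a_1,\ldots,a_n)$, set $y=[a_{n+1};a_{n+2},\ldots]\in[1,\infty]$ (allowing $y=\infty$ when the expansion terminates at step $n$). A short induction on $n$ using \eqref{ppqq} yields the classical identity
\[
x \;=\; \frac{y\,p_n + p_{n-1}}{y\,q_n + q_{n-1}}.
\]
Viewing the right-hand side as a function of the real variable $y$, its derivative equals $(p_n q_{n-1}-p_{n-1}q_n)/(y q_n+q_{n-1})^2 = (-1)^n/(y q_n+q_{n-1})^2$, which has constant sign. Hence $x$ depends monotonically on $y$, and as $y$ ranges over $[1,\infty]$, $x$ traces out the closed interval whose endpoints are obtained by taking $y\to\infty$ and $y=1$, namely $p_n/q_n$ and $(p_n+p_{n-1})/(q_n+q_{n-1})$.

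Conversely, every $y\in[1,\infty]$ arises as the tail of some admissible continued fraction expansion whose first $n$ partial quotients are exactly $a_1,\ldots,a_n$, so every point of this interval belongs to $I_n(a_1,\ldots,a_n)$. This identifies the cylinder with the interval in question. The length then follows from one line of algebra,
\[
\left|\frac{p_n}{q_n}-\frac{p_n+p_{n-1}}{q_n+q_{n-1}}\right| \;=\; \frac{|p_n q_{n-1}-p_{n-1}q_n|}{q_n(q_n+q_{n-1})} \;=\; \frac{1}{q_n(q_n+q_{n-1})},
\]
invoking the standard determinant identity $p_n q_{n-1}-p_{n-1}q_n=(-1)^n$, itself a trivial induction from \eqref{ppqq}.

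There is no serious obstacle; the only care needed is bookkeeping at the endpoints, since whether the two boundary rationals $p_n/q_n$ and $(p_n+p_{n-1})/(q_n+q_{n-1})$ are themselves members of $I_n(a_1,\ldots,a_n)$ depends on the convention for continued-fraction expansions of rationals. This boundary issue is irrelevant for the length computation, and in fact for all later applications (Hausdorff dimension calculations), so it can be dispatched in a single remark.
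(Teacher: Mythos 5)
Your proof is correct and is precisely the standard argument: the paper offers no proof of its own here, merely citing Iosifescu--Kraaikamp, and your M\"obius-transformation parametrization of the cylinder by the tail $y$ is exactly the argument that citation imports. (One immaterial slip: with the initial conditions in \eqref{ppqq} the determinant identity reads $p_nq_{n-1}-p_{n-1}q_n=(-1)^{n-1}$ rather than $(-1)^n$, which changes nothing since only the constancy of the sign and the absolute value are used.)
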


\subsection{Some useful lemmas to estimate Hausdorff dimension}
For any $M\in\mathbb{N}$, let $E_M$ be the set of points in $[0,1)$ whose partial quotients in continued fraction expansion do not exceed $M$. That is,
\[E_{M}=\{x\in[0,1):\ 1\leq a_n(x)\leq M, \forall\  n\geq1\}.\]
Jarn\'{\i}k  \cite{Jar28} considered its Hausdorff dimension.
\begin{lemma}\label{Ja}
For any $M\geq8$,
\[1-\frac{1}{M\log2}\leq\dim_{\rm H}E_{M}\leq 1-\frac{1}{8M\log M}.\]
\end{lemma}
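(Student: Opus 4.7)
The result is classical, due to Jarn\'ik (1928), and the plan is to establish the two inequalities by separate dimension-theoretic arguments: the lower bound via the mass distribution principle applied to a suitable Bernoulli measure on $E_M$, and the upper bound via a direct covering argument using the natural $n$-th level cylinder cover.

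\emph{Lower bound.} I would regard $E_M$ as the attractor of the iterated function system $\{\phi_k(x) = 1/(k+x) : 1 \le k \le M\}$ and assign to each $n$-cylinder $I_n(a_1,\ldots,a_n)$ with $a_i \in \{1,\ldots,M\}$ the Bernoulli mass $\mu(I_n) = M^{-n}$. By Proposition \ref{cd}, $|I_n| \asymp q_n^{-2}$; by Lemma \ref{fl}, $\log q_n \geq \tfrac{n-1}{2}\log 2$, while the $M^n$ admissible words contribute symbol-space entropy $n\log M$. Balancing these two quantities gives $\mu(I_n) \le C\,|I_n|^s$ for every $s$ slightly below $1 - 1/(M\log 2)$, and the mass distribution principle then delivers the claimed lower bound for $\dim_{\rm H}E_M$.

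\emph{Upper bound.} I would cover $E_M$ by the $M^n$ cylinders of level $n$ with $a_i \in \{1,\ldots,M\}$, each of diameter comparable to $q_n^{-2}$ by Proposition \ref{cd}. The plan is then to show that for every $s > 1 - 1/(8M\log M)$ the Hausdorff $s$-sum $\sum |I_n(a_1,\ldots,a_n)|^s$ tends to $0$ as $n \to \infty$, forcing $\mathcal{H}^s(E_M) = 0$ and hence $\dim_{\rm H}E_M \le s$.

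\emph{Main obstacle.} Qualitative bounds of the shape $0 < \dim_{\rm H} E_M < 1$ follow from Lemmas \ref{fl}--\ref{qan} and Proposition \ref{cd} rather painlessly. The genuine difficulty is in extracting the sharp constants $1/(M\log 2)$ and $1/(8M\log M)$: this requires careful tracking of the multiplicative constants in the $q_n$-recursion \eqref{ppqq}, a refined lower bound for $q_n$ using the actual values of $a_1,\ldots,a_n$ (and not merely the crude Lemma \ref{fl}), and optimization of the Bernoulli weights beyond the naive uniform choice. Since the result is invoked here only as a classical input, in practice one would simply cite Jarn\'ik \cite{Jar28} rather than reproduce these calculations.
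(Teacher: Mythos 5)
The paper contains no proof of this lemma at all: it is stated as a classical fact and attributed to Jarn\'ik \cite{Jar28}, which is precisely where your last sentence lands. So on the practical point you and the paper agree --- cite and move on.

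That said, the sketch you interpose would not, as written, produce the stated constants, and the problem is more than ``careful tracking of multiplicative constants.'' For the lower bound, the uniform Bernoulli measure $\mu(I_n)=M^{-n}$ combined with Proposition \ref{cd} requires, for the mass distribution principle, an inequality of the form $M^{-n}\leq C|I_n|^s$; since the only uniform control available is $q_n\leq\prod_{i=1}^n(a_i+1)\leq(M+1)^n$, hence $|I_n|\geq\tfrac12(M+1)^{-2n}$, the ``balance'' you describe yields only $s\leq\log M/(2\log(M+1))$, i.e.\ a lower bound near $\tfrac12$, nowhere near $1-\tfrac{1}{M\log 2}$ for large $M$. (Lemma \ref{fl} points the wrong way here: it bounds $q_n$ from below, which bounds $|I_n|$ from above and is useless for minorizing $|I_n|^s$.) Jarn\'ik's constant comes from a genuinely non-uniform measure --- digit $k$ weighted roughly like $k^{-2s}$, or equivalently the normalized Gauss measure on $\{a_1\leq M\}$, whose deficiency from $1$ is of order $1/(M\log 2)$ --- together with a recursive estimate on $\sum_{a_1,\dots,a_n\leq M}q_n^{-2s}$ exploiting the quasi-multiplicativity of $q_n$ (Lemma \ref{qan}). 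The same recursive sum estimate, not a naive count of $M^n$ cylinders of uniform size, is what drives the upper bound. You flag all of this yourself in your ``main obstacle'' paragraph, so the proposal is an honest heuristic rather than a false proof; but as a proof it is incomplete, and since the paper itself supplies nothing beyond the citation, the citation is the right deliverable.
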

The following dimensional result is useful for  obtaining  the lower bound estimate of Hausdorff dimension for some  sets in continued fractions.
\begin{lemma}[{\cite[Lemma 3.2]{FLWW09}}]\label{flww}
Let $\{t_n\}_{n\geq1}$ be a sequence of positive integers tending to infinity with $t_n\geq3$ for all $n\geq1$. Then for any positive number $N\geq2$,
\[\dim_{\rm H}\{x\in[0,1): t_n\leq a_n(x)<Nt_n, \forall\ n\geq1\}=
\liminf\limits_{n\to\infty}\frac{\log(t_1t_2\cdots t_n)}{2\log(t_1t_2\cdots  t_n)+\log t_{n+1}}.\]
\end{lemma}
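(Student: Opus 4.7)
Write $F$ for the set in the lemma. I would determine $\dim_{\rm H}F$ by the standard two-bound strategy for Cantor-like subsets of $[0,1)$ arising from restrictions on partial quotients: a natural cover gives the upper bound and a Frostman-type measure gives the lower bound. The basic quantitative input, obtained by iterating the recursion \eqref{ppqq} under $t_k\leq a_k<Nt_k$ together with Proposition \ref{cd}, is that $|I_n(a_1,\ldots,a_n)|$ is comparable to $(t_1\cdots t_n)^{-2}$ up to a factor $(N+1)^{2n}$. Since $t_n\to\infty$ forces $\log(t_1\cdots t_n)/n\to\infty$ by Cesaro averaging, any factor of $N^n$ will be absorbed inside the liminf appearing in the statement, so I will treat such factors as harmless below.

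For the upper bound I would use the ``super-cylinder'' cover
\[
\widetilde I_n(a_1,\ldots,a_n):=\bigcup_{t_{n+1}\leq a<Nt_{n+1}} I_{n+1}(a_1,\ldots,a_n,a),
\]
which by Proposition \ref{cd} and Lemma \ref{qan} is an interval of length of order $(t_{n+1}q_n^2)^{-1}$. Because $F\subseteq\bigcup\widetilde I_n$ with at most $N^n\cdot t_1\cdots t_n$ admissible tuples $(a_1,\ldots,a_n)$, a direct summation shows $\sum|\widetilde I_n|^s\to 0$ for every $s$ strictly larger than $\liminf_n \log(t_1\cdots t_n)\big/\bigl(2\log(t_1\cdots t_n)+\log t_{n+1}\bigr)$, yielding the desired upper bound.

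For the lower bound I would place a uniform Bernoulli-type mass $\mu$ on admissible cylinders by setting $\mu(I_n(a_1,\ldots,a_n))=\prod_{k=1}^n\lfloor(N-1)t_k\rfloor^{-1}$ and extending consistently, and then appeal to the mass distribution principle. For $x\in F$ and small $r>0$, choose $n$ with $|\widetilde I_n(x)|\leq r<|\widetilde I_{n-1}(x)|$ and split into the two regimes $r\in[|I_n(x)|,|\widetilde I_{n-1}(x)|)$ and $r\in[|\widetilde I_n(x)|,|I_n(x)|)$; in each regime, counting the level-$n$ (respectively level-$(n+1)$) admissible cylinders meeting $B(x,r)$ and combining with the length estimate above gives $\mu(B(x,r))\leq Cr^{s}$ for every $s$ strictly smaller than the target liminf. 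The main technical obstacle lies precisely in this counting step: inside a level-$n$ admissible cylinder the level-$(n+1)$ admissible cylinders cluster in a subinterval of proportional length roughly $1/t_{n+1}$, so $\mu$ is very far from doubling, and carefully tracking this clustering is exactly what produces the sharp $\log t_{n+1}$ contribution in the denominator of the formula.
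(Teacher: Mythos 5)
The paper does not prove this lemma at all: it is imported verbatim as \cite[Lemma 3.2]{FLWW09}, so the only ``paper proof'' to compare against is the one in that reference. Your outline --- super-cylinder covers $\widetilde I_n$ of length $\asymp (t_{n+1}q_n^2)^{-1}$ for the upper bound, a uniform Bernoulli measure plus the mass distribution principle with the two-regime ball estimate for the lower bound, and the observation that $\log(t_1\cdots t_n)/n\to\infty$ absorbs all $C^n$ factors --- is exactly the argument used there, and the step you flag as the main obstacle (counting level-$(n+1)$ cylinders clustered in a $1/t_{n+1}$-proportion of $I_n$, which is what generates the $\log t_{n+1}$ in the denominator) is handled in the reference by precisely the case analysis you describe.
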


\section{Proofs of main results}
In this section, we give the proofs of main results. Our proofs are inspired by  Wu and Xu \cite{WX09}, Wang and Wu \cite{WW08} and especially by Liao and Rams \cite{LR16}.
\subsection{The proof of Theorem \ref{zdl}}
 We divide the proof into three cases. Recall that
\[E_{\sup}(\psi)=\left\{x\in[0,1):\ \limsup\limits_{n\to\infty}\frac{\log a_n(x)}{\psi(n)}=1\right\}.\]
\subsubsection{\textbf{The case} $\psi(n)/n\to0$ as $n\to\infty$.}
 In this case, our strategy is to construct a suitable Cantor subset $E_{M}(\psi)$ of $E_{\sup}(\psi)$ and then establish a connection between $E_{M}(\psi)$ and $E_M$ by means of a $(1+\varepsilon)$ H\"{o}lder function. Choose a strictly increasing subsequence $\{m_k\}_{k\geq1}\subseteq\mathbb{N}$ satisfying $m_k=2^k$  for any $k\geq1$. For any $M\in\mathbb{N}$, define
\begin{align*}
  E_M(\psi)=\Big\{x\in[0,1): a_{m_k}(x)=\lfloor\exp\psi(m_k)\rfloor\ \text{and}\ 1\leq a_i(x)\leq M~(i\neq m_k),\ \forall\ k\geq1\Big\}.
\end{align*}

\begin{lemma}\label{phibh}
For any $M\in\mathbb{N}$, $E_M(\psi)\subseteq E_{\sup}(\psi)$.
\end{lemma}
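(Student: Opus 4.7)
The plan is to unpack the definition of $E_M(\psi)$ and evaluate the limsup defining $E_{\sup}(\psi)$ by splitting the indices into two subsequences: the distinguished indices $n = m_k$, where the partial quotient is large, and the complementary indices, where the partial quotient is bounded by $M$.

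First I would fix an arbitrary $x \in E_M(\psi)$ and, for every $k \geq 1$, examine the ratio $\log a_{m_k}(x)/\psi(m_k)$. By construction $a_{m_k}(x) = \lfloor \exp \psi(m_k) \rfloor$, and since $\psi(n) \to \infty$ we have $\psi(m_k) \to \infty$, so
\[
\frac{\log a_{m_k}(x)}{\psi(m_k)} = \frac{\log \lfloor \exp \psi(m_k)\rfloor}{\psi(m_k)} \xrightarrow[k \to \infty]{} 1,
\]
using the elementary fact that $\log \lfloor y \rfloor / \log(y) \to 1$ (hence $\log\lfloor\exp t\rfloor/t \to 1$) as $y, t \to \infty$. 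This already shows that the limsup along the full sequence is at least $1$.

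Next I would handle the complementary indices: for $n \notin \{m_k\}_{k \geq 1}$ we have $a_n(x) \leq M$, so
\[
\frac{\log a_n(x)}{\psi(n)} \leq \frac{\log M}{\psi(n)} \xrightarrow[n \to \infty]{} 0,
\]
again because $\psi(n) \to \infty$. Combining the two estimates, the sequence $\{\log a_n(x)/\psi(n)\}_{n \geq 1}$ has one cluster point equal to $1$ coming from the subsequence $n = m_k$ and otherwise stays below any fixed positive number for large $n$. Therefore $\limsup_{n \to \infty} \log a_n(x)/\psi(n) = 1$, which places $x$ in $E_{\sup}(\psi)$.

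There is no real obstacle here: the argument is a direct verification once one notices that the definition of $E_M(\psi)$ was tailored so that exactly the distinguished indices $m_k$ produce the prescribed growth rate $\log a_{m_k} \sim \psi(m_k)$, while the remaining indices are suppressed by the uniform upper bound $M$. Note that the hypothesis $\psi(n)/n \to 0$ plays no role in this particular inclusion; it will enter later when one estimates the Hausdorff dimension of $E_M(\psi)$ via a H\"older comparison with $E_M$.
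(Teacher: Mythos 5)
Your proof is correct and follows essentially the same route as the paper: the limsup is evaluated along the distinguished subsequence $n=m_k$, where $\log\lfloor\exp\psi(m_k)\rfloor/\psi(m_k)\to 1$, while the complementary indices contribute at most $\log M/\psi(n)\to 0$. If anything, you are slightly more explicit than the paper about why the non-distinguished indices cannot raise the limsup above $1$.
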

\begin{proof}
Notice that $\psi(n)\to\infty$ as $n\to\infty$, then for any positive number $c$, there exists a positive integer $N$ such that for any $k\geq N$, $\psi(k)>c$. Take $c=\log2M$, then for any $m_k>k\geq N$, we have
$\lfloor\exp\psi(m_k)\rfloor\geq\lfloor\exp(\log2M)\rfloor\geq M$. Hence, for any $x\in E_M(\psi)$,
\[\lim\limits_{k\to\infty}\frac{\log a_{m_k}(x)}{\psi(m_k)}=1,\ \text{i.e.},\ \limsup\limits_{n\to\infty}\frac{\log a_{n}(x)}{\psi(n)}=1.\]
we complete the proof.
\end{proof}
To estimate the Hausdorff dimension of $E_M(\psi)$, in the following we shall make use of a kind of symbolic space described below.
For any $n\geq1$, set
\[C_n=\left\{(\sigma_1,\cdots,\sigma_n)\in\mathbb{N}^{n}:\sigma_{m_k}=\lfloor\exp\psi(m_k)\rfloor\ \text{and}\ 1\leq\sigma_i\leq M,1\leq i\neq m_k\leq n\right\}.\]
For any $n\geq1$ and $(\sigma_1,\cdots,\sigma_n)\in C_n$, we call
$I_n(\sigma_1,\cdots,\sigma_n)$ the cylinder of order $n$ and \[J_n(\sigma_1,\cdots,\sigma_n)=\bigcup\limits_{\sigma_{n+1}}I_{n+1}(\sigma_1,\cdots,\sigma_n,\sigma_{n+1})\]
a fundamental interval of order $n$,
where $(\sigma_1,\cdots,\sigma_n,\sigma_{n+1})\in C_{n+1}$. It is obvious that
\[E_M(\psi)=\bigcap\limits_{n\geq1}\bigcup\limits_{(\sigma_1,\cdots,\sigma_n)\in C_n}I_n(\sigma_1,\cdots,\sigma_n)=
\bigcap\limits_{n\geq1}\bigcup\limits_{(\sigma_1,\cdots,\sigma_n)\in C_n}J_n(\sigma_1,\cdots,\sigma_n).\]
Let $t(n)=\#\{k:2^k\leq n\}$, clearly we have
\begin{equation}\label{jbo}
\lim\limits_{n\to\infty}\frac{t(n)}{n}=0.
\end{equation}
 For any $(\sigma_1,\cdots,\sigma_n)\in C_n$, let $\overline{(\sigma_1,\cdots,\sigma_n)}$ be the block
obtained by eliminating the terms $\{\sigma_{m_k}:1\leq k\leq t(n)\}$ in $(\sigma_1,\cdots,\sigma_n)$, then we can write
\[\overline{(\sigma_1,\cdots,\sigma_n)}\in\mathcal\{1, 2,\cdots, M\}^{n-t(n)}.\]
For simplicity, we set
\begin{equation}\label{iq}
\overline{I_n}(\sigma_1,\cdots,\sigma_n)=I_{n-t(n)}(\sigma_1,\cdots,\sigma_n),\ \overline{q_n}(\sigma_1,\cdots,\sigma_n)=q_{n-t(n)}(\sigma_1,\cdots,\sigma_n).
\end{equation}
It is worth to note that $\lim\limits_{n\to\infty}\psi(n)/n=0$ and $m_k=2^k\ (k\geq1)$, then
\begin{align*}
0\leq\limsup\limits_{n\to\infty}\frac{1}{n}\sum\limits_{i=1}^{t(n)}\psi(m_i)
=&\limsup\limits_{n\to\infty}\frac{1}{n}\left(\psi(m_1)+\psi(m_2)+\cdots+\psi(m_{t(n)})\right)\\
\leq&\limsup\limits_{n\to\infty}\frac{1}{2^{t(n)}}\left(\psi(2^1)+\psi(2^2)+\cdots+\psi(2^{t(n)})\right)\\
\leq&\limsup\limits_{n\to\infty}\frac{1}{2^{n}}\left(\psi(2^1)+\psi(2^2)+\cdots+\psi(2^{n})\right)\\
\leq&\limsup\limits_{n\to\infty}\frac{\psi(2^{n+1})}{2^{n+1}-2^n}=
2\limsup\limits_{n\to\infty}\frac{\psi(2^{n+1})}{2^{n+1}}=0,
\end{align*}
thus we have
\begin{equation}\label{zytj}
\limsup\limits_{n\to\infty}\frac{1}{n}\sum\limits_{i=1}^{t(n)}\psi(m_i)=0.
\end{equation}
 \begin{lemma}\label{qjbj}
 For any $\varepsilon>0$, there exists $N_1$ such that for any $n\geq N_1$ and $(\sigma_1,\cdots,\sigma_n)\in C_n$,
 \[|I_n(\sigma_1,\cdots,\sigma_n)|\geq|\overline{I_n}(\sigma_1,\cdots,\sigma_n)|^{1+\varepsilon}.\]
 \end{lemma}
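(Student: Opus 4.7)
The plan is to compare the two cylinders via their denominators. By Proposition \ref{cd}, we have
\[
|I_n(\sigma_1,\dots,\sigma_n)|\asymp \frac{1}{q_n^{2}}\qquad\text{and}\qquad |\overline{I_n}(\sigma_1,\dots,\sigma_n)|\asymp \frac{1}{\overline{q_n}^{\,2}},
\]
where $q_n=q_n(\sigma_1,\dots,\sigma_n)$ and $\overline{q_n}=\overline{q_n}(\sigma_1,\dots,\sigma_n)$ are as defined in \eqref{iq}. Taking logarithms, the desired inequality $|I_n|\ge|\overline{I_n}|^{1+\varepsilon}$ is, up to bounded additive errors, equivalent to
\[
\log q_n - \log\overline{q_n}\ \le\ \varepsilon\,\log\overline{q_n}+O(1).
\]
So the whole task reduces to two estimates: (a) the left-hand side is $o(n)$, and (b) $\log\overline{q_n}$ grows at least linearly in $n$.

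For (a), I would apply Lemma \ref{qan} iteratively, removing the partial quotients at positions $m_1,m_2,\dots,m_{t(n)}$ one by one. At each removal the ratio of consecutive $q$'s is bounded by $\sigma_{m_k}+1\le \exp\psi(m_k)+1$, so
\[
\frac{q_n}{\overline{q_n}}\ \le\ \prod_{k=1}^{t(n)}\bigl(\exp\psi(m_k)+1\bigr),
\]
and therefore
\[
\log q_n-\log\overline{q_n}\ \le\ \sum_{k=1}^{t(n)}\psi(m_k)+O(t(n)).
\]
Both terms on the right are $o(n)$: the sum by the estimate \eqref{zytj} established just before the lemma, and $t(n)=o(n)$ by \eqref{jbo}.

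For (b), note that the reduced word $\overline{(\sigma_1,\dots,\sigma_n)}$ has length $n-t(n)$, so Lemma \ref{fl} applied to it gives $\overline{q_n}\ge 2^{(n-t(n)-1)/2}$, hence
\[
\log\overline{q_n}\ \ge\ \frac{n-t(n)-1}{2}\,\log 2,
\]
which by \eqref{jbo} is bounded below by $cn$ for some $c>0$ once $n$ is large enough. Combining (a) and (b), for $n$ large the ratio $(\log q_n-\log\overline{q_n})/\log\overline{q_n}$ is smaller than $\varepsilon$, and Proposition \ref{cd} then yields the claimed inequality after absorbing the $O(1)$ error into the choice of $N_1$.

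The only delicate point, and the step I expect to need the most care, is the iterated application of Lemma \ref{qan}: one has to check that each successive deletion of a large partial quotient still satisfies the hypotheses of that lemma (so that one may indeed telescope the inequalities), and that the bookkeeping of the indices after deletion does not introduce hidden dependence on the large values $\sigma_{m_k}$. Everything else is a routine combination of \eqref{zytj}, Lemma \ref{fl}, and Proposition \ref{cd}.
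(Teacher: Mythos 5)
Your proof is correct and follows essentially the same route as the paper: both arguments iterate Lemma \ref{qan} to bound $q_n/\overline{q_n}$ by $\prod_{k=1}^{t(n)}(\exp\psi(m_k)+1)$, invoke \eqref{zytj} and \eqref{jbo} to see this is $\exp(o(n))$, use Lemma \ref{fl} to get $\log\overline{q_n}\gtrsim n$, and finish with Proposition \ref{cd}; the paper just writes the comparison multiplicatively (inequality \eqref{zy}) rather than in logarithmic form. The ``delicate point'' you flag is harmless, since Lemma \ref{qan} applies to the deletion of any single entry from an arbitrary word, so the telescoping is immediate.
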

 \begin{proof}
 Let $\varepsilon>0$, it follows \eqref{jbo}, \eqref{iq}, \eqref{zytj} and Lemma \ref{fl} that there exists $N_1$ such that for any $n\geq N_1$ and $(\sigma_1,\cdots,\sigma_n)\in C_n$,
 \begin{equation}\label{zy}
 \overline{q_n}^{2\varepsilon}(\sigma_1,\cdots,\sigma_n)
 =q_{n-t(n)}^{2\varepsilon}(\sigma_1,\cdots,\sigma_n)\geq2^{(n-t(n)-1)\varepsilon}
 \geq2\prod\limits_{i=1}^{t(n)}(\exp\psi(m_i)+1)^{2}.
 \end{equation}
 In view of \eqref{zy}, by Lemma \ref{qan} and Proposition \ref{cd}, we have
 \begin{eqnarray*}
 |I_n(\sigma_1,\cdots,\sigma_n)|&\geq&\frac{1}{2q^{2}_n(\sigma_1,\cdots,\sigma_n)}
 \geq\frac{1}{2\left(q_{n-t(n)}(\sigma_1,\cdots,\sigma_n)\prod\limits_{i=1}^{t(n)}(\exp\psi(m_i)+1)\right)^{2}}\\
 &\geq&\frac{1}{q_{n-t(n)}^{2+2\varepsilon}(\sigma_1,\cdots,\sigma_n)}
 \geq|\overline{I_n}(\sigma_1,\cdots,\sigma_n)|^{1+\varepsilon}.
 \end{eqnarray*}
\end{proof}
For any $x=[\eta_1,\eta_2,\cdots]\in E_M(\psi), y\in[\tau_1,\tau_2,\cdots]\in E_M(\psi)$ and $x\neq y$, then there exists the greatest integer $n$ such that $x,y$ are contained in the same the cylinder of order $n$, i.e., $x,y\in I_n(\sigma_1,\cdots,\sigma_n)$. Therefore  there exist
 $l_{n+1}\neq r_{n+1}$ such that $(\sigma_1,\cdots,\sigma_n,l_{n+1})\in C_{n+1}$, $(\sigma_1,\cdots,\sigma_n,r_{n+1})\in C_{n+1}$ and
 $x\in I_{n+1}(\sigma_1,\cdots,\sigma_n,l_{n+1}), y\in I_{n+1}(\sigma_1,\cdots,\sigma_n,r_{n+1})$ respectively. We next compare $|x-y|$ with $|I_{n}(\sigma_1,\cdots,\sigma_n)|$.
 Since
 \[I_{n+1}(\sigma_1,\cdots,\sigma_n,l_{n+1})\bigcap E_M(\psi)=J_{n+1}(\sigma_1,\cdots,\sigma_n,l_{n+1})\bigcap E_M(\psi),\]
 \[I_{n+1}(\sigma_1,\cdots,\sigma_n,r_{n+1})\bigcap E_M(\psi)=J_{n+1}(\sigma_1,\cdots,\sigma_n,r_{n+1})\bigcap E_M(\psi),\]
 we have
 $x\in J_{n+1}(\sigma_1,\cdots,\sigma_n,l_{n+1}), y\in J_{n+1}(\sigma_1,\cdots,\sigma_n,r_{n+1})$.
 As a consequence, $|y-x|$ is not less than the gap between $J_{n+1}(\sigma_1,\cdots,\sigma_n,r_{n+1})$ and $J_{n+1}(\sigma_1,\cdots,\sigma_n,l_{n+1})$.
 \begin{lemma}\label{jg}
\begin{equation*}
|x-y|\geq\frac{1}{(M+2)^3}|I_n(\sigma_1,\cdots,\sigma_n)|.
\end{equation*}
\end{lemma}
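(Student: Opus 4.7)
The plan is to lower-bound the geometric gap between the two fundamental intervals $J_{n+1}(\sigma_1,\ldots,\sigma_n,l_{n+1})$ and $J_{n+1}(\sigma_1,\ldots,\sigma_n,r_{n+1})$ in terms of $|I_n(\sigma_1,\ldots,\sigma_n)|$, using only Proposition \ref{cd} and the recursion $q_{k}=\sigma_k q_{k-1}+q_{k-2}$. First I would observe that the index $n+1$ cannot coincide with any $m_k$, for otherwise $C_{n+1}$ forces $\sigma_{n+1}=\lfloor\exp\psi(m_k)\rfloor$ uniquely, contradicting $l_{n+1}\neq r_{n+1}$. Hence $l_{n+1},r_{n+1}\in\{1,\ldots,M\}$, and I may assume $l_{n+1}<r_{n+1}$.

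If $r_{n+1}-l_{n+1}\geq 2$, I pick an integer $j$ strictly between them. Since the family $\{I_{n+1}(\sigma_1,\ldots,\sigma_n,k)\}_{k\geq 1}$ tiles $I_n(\sigma_1,\ldots,\sigma_n)$ in monotone order of $k$ (a consequence of Proposition \ref{cd}), the cylinder $I_{n+1}(\sigma_1,\ldots,\sigma_n,j)$ sits in the geometric gap between the two target sub-cylinders. A direct application of Proposition \ref{cd} combined with $q_{n+1}=jq_n+q_{n-1}\leq (M+1)q_n$ yields $|I_{n+1}(\sigma_1,\ldots,\sigma_n,j)|\geq |I_n(\sigma_1,\ldots,\sigma_n)|/(M(M+1))$, which exceeds $|I_n(\sigma_1,\ldots,\sigma_n)|/(M+2)^3$.

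The delicate case is $r_{n+1}=l_{n+1}+1$. Then $I_{n+1}(\sigma_1,\ldots,\sigma_n,l_{n+1})$ and $I_{n+1}(\sigma_1,\ldots,\sigma_n,r_{n+1})$ share a common endpoint $B$, so $x$ and $y$ lie on opposite sides of $B$ and $|x-y|$ is at least the distance from whichever of the two fundamental intervals is farther from $B$. The goal reduces to showing that at least one of the two fundamental intervals sits at distance at least $|I_n(\sigma_1,\ldots,\sigma_n)|/(M+2)^3$ from $B$. This forks into two sub-cases depending on whether $n+2$ equals some $m_{k'}$. In the generic sub-case, $J_{n+1}(\sigma_1,\ldots,\sigma_n,\sigma_{n+1})=\bigcup_{i=1}^M I_{n+2}(\sigma_1,\ldots,\sigma_n,\sigma_{n+1},i)$; the mediant identity $|p_kq_{k-1}-q_kp_{k-1}|=1$ shows that inside $I_{n+1}(\sigma_1,\ldots,\sigma_n,r_{n+1})$ the tail from $B$ down to the $M$-th sub-cylinder has length exactly $1/[q_{n+1}((M+1)q_{n+1}+q_n)]$ with $q_{n+1}=r_{n+1}q_n+q_{n-1}\leq (M+1)q_n$, which one bounds below by $|I_n(\sigma_1,\ldots,\sigma_n)|/(M+2)^3$. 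In the forced sub-case ($n+2=m_{k'}$), $J_{n+1}(\sigma_1,\ldots,\sigma_n,\sigma_{n+1})=I_{n+2}(\sigma_1,\ldots,\sigma_n,\sigma_{n+1},N)$ with $N=\lfloor\exp\psi(m_{k'})\rfloor$; an analogous mediant computation gives the bound via the $l_{n+1}$-side when $N\geq 2$ and via the $r_{n+1}$-side when $N=1$.

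The main obstacle is the orientation bookkeeping in the delicate case: inside $I_{n+1}(\sigma_1,\ldots,\sigma_n,l_{n+1})$ the shared endpoint $B$ coincides with the ``$i=1$'' end of the sub-cylinder tiling, whereas inside $I_{n+1}(\sigma_1,\ldots,\sigma_n,r_{n+1})$ it coincides with the ``$i\to\infty$'' accumulation end, so the role of the ``favorable'' side swaps between the two sub-cases. Once this geometry is correctly tracked, the remaining computations reduce to elementary arithmetic on the $q_k$'s via Proposition \ref{cd} and the mediant identity.
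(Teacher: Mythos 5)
Your proposal is correct and follows essentially the same route as the paper: both observe that $n+1\neq m_k$, reduce $|x-y|$ to the gap between the two fundamental intervals $J_{n+1}$, and bound that gap via the explicit mediant/M\"obius parametrization of cylinder endpoints, splitting on whether $n+2$ equals some $m_{k'}$. The paper merely computes the difference of the two facing endpoints in a single step, which absorbs both your separate $r_{n+1}-l_{n+1}\geq 2$ case and your one-sided distance-to-$B$ bookkeeping, but the underlying estimates are the same.
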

\begin{proof}
Without loss of generality, we assume that $x<y$ and $n$ is even, for others cases, the proofs are similar. Notice that $n+1\neq m_k$ for any $k\in\mathbb{N}$, otherwise $l_{n+1}=r_{n+1}=\lfloor\exp\psi(m_k)\rfloor$. So we divide the proof into two parts.
\begin{enumerate}[(i)]
\item If $n+2=m_k$ for some $k\geq1$, let $\lfloor\exp\psi(m_k)\rfloor=t$, then $t\geq1$ and we have
\begin{align*}
|y-x|=&\left|\frac{(r_{n+1}+\frac{1}{t})p_n+p_{n-1}}{(r_{n+1}+\frac{1}{t})q_n+q_{n-1}}-
\frac{(l_{n+1}+\frac{1}{t+1})p_n+p_{n-1}}{(l_{n+1}+\frac{1}{t+1})q_n+q_{n-1}}\right|\\
\geq&\frac{|l_{n+1}-r_{n+1}+\frac{1}{t+1}-\frac{1}{t}|}{(M+2)^2q^{2}_n}\geq\frac{1}{2(M+2)^2q^{2}_n}\\
\geq&\frac{1}{(M+2)^3}|I_n(\sigma_1,\cdots,\sigma_n)|.
\end{align*}
\item If $n+2\neq m_k$ for any $k\geq1$, then we have
\begin{align*}
|y-x|=&\left|\frac{(r_{n+1}+1)p_n+p_{n-1}}{(r_{n+1}+1)q_n+q_{n-1}}-
\frac{(l_{n+1}+\frac{1}{M+1})p_n+p_{n-1}}{(l_{n+1}+\frac{1}{M+1})q_n+q_{n-1}}\right|\\
\geq&\frac{|r_{n+1}+1-l_{n+1}-\frac{1}{M+1}|}{(M+2)^2q^{2}_n}\geq\frac{1}{(M+2)^3q^{2}_n}\\
\geq&\frac{1}{(M+2)^3}|I_n(\sigma_1,\cdots,\sigma_n)|.
\end{align*}
\end{enumerate}
\end{proof}
\indent Consider a map $f:\ E_{M}(\psi)\rightarrow E_{M}$ defined as follows: for any $x=[\sigma_1,\cdots,\sigma_n]$, let
\[f(x)=\lim\limits_{n\to\infty}\overline{[\sigma_1,\cdots,\sigma_n]}.\]
For any $\varepsilon>0$, when $x,y\in E_{M}(\psi)$ satisfying
\begin{equation*}
|x-y|<\frac{1}{(M+2)^3}\min\limits_{(\sigma_1,\cdots,\sigma_{N_1})\in C_{N_1}}\left\{I_{N_1}(\sigma_1,\cdots,\sigma_{N_1})\right\},
\end{equation*}
where $N_1$ is the same as in Lemma \ref{qjbj}. It follows Lemma \ref{qjbj} and Lemma \ref{jg} that
\[|f(x)-f(y)|\leq|\overline{I}_{n}(\sigma_1,\cdots,\sigma_{n})|
\leq|I_{n}(\sigma_1,\cdots,\sigma_{n})|^{\frac{1}{1+\varepsilon}}
\leq(M+2)^{\frac{3}{1+\varepsilon}}|x-y|^{\frac{1}{1+\varepsilon}}.\]
Hence by \cite[Proposition 2.3]{Fal90} and Lemma \ref{phibh}, we have
\[
\dim_{\rm H}E_{\sup}(\psi)\geq\dim_{\rm H}E_{M}(\psi)\geq\frac{1}{1+\varepsilon}\dim_{\rm H}E_{M}.
\]
Let $\varepsilon\rightarrow0^{+}$ and then $M\rightarrow\infty$, we have $\dim_{\rm H} E_{\sup}(\psi)=1$. The proof is complete.\\
\subsubsection{\textbf{The case} $\psi(n)/n\to\alpha\ (0<\alpha<\infty)$ as $n\to\infty$.}
In this case, it is easy to see that  the set $E_{\sup}(\psi)$ equals to
\[F(\alpha):=\left\{x\in[0,1):\ \limsup\limits_{n\to\infty}\frac{\log a_n(x)}{n}=\alpha\right\}.\]
Hence, in order to get $\dim_{\rm H}E_{\sup}(\psi)$, it is equivalent to obtain $\dim_{\rm H}F(\alpha)$.\\
\textbf{Upper bound:} For $x\in F(\alpha)$, for any $0<\varepsilon<\alpha$, there are infinitely many $n'$s such that
\[a_n(x)\geq\exp(n(\alpha-\varepsilon)).\]
It then follows that
\[ F(\alpha)\subseteq\left\{x\in[0,1):\ a_n(x)\geq(\exp(\alpha-\varepsilon))^{n}\ \text{for infinitely many}\ n\right\}.\]
Let $\varepsilon\to0^{+}$, we deduce from \cite[Lemma 2.6, Theorem 3.1]{WW08} that
\[\dim_{\rm H}F(\alpha)\leq S(\alpha).\]
\textbf{Lower bound:} We can use the same method in the proof of \cite[Theorem 3.1]{WW08}. For completeness, here we just outline the proof. Fix $M\in\mathbb{N}$, let $\{m_k\}_{k\geq1}\subseteq\mathbb{N}$ be a subsequence satisfying $m_1=1$ and $m_1+\cdots+m_k\leq\frac{1}{k+1}m_{k+1}$ for all $k\geq2$. Define
\begin{eqnarray*}
  F_{M}(\exp(\alpha))=\{x\in[0,1):\ \lfloor\exp (\alpha m_k)\rfloor+1\leq a_{m_k}(x)\leq2\lfloor\exp (\alpha m_k)\rfloor\ \text{for all} \ \ k\geq 1 \
   \text{and} \\
   1\leq a_i(x)\leq M\ \text{for}\ i\neq m_k\ \text{for any}\ k\geq1\}.
\end{eqnarray*}
It is easy to verify that $F_{M}(\exp(\alpha))\subseteq F(\alpha)$. Let $M\to\infty$, it follows \cite[Lemma 2.5]{WW08} that
\[\dim_{\rm H}F(\alpha)\geq S(\alpha).\]
\subsubsection{\textbf{The case} $\psi(n)/n\to\infty$ as $n\to\infty$.}
\textbf{Upper bound:} For $x\in E_{\sup}(\psi)$, for any $\varepsilon>0$, there exist infinitely many $n'$s such that
\[\log a_n(x)\geq\psi(n)(1-\varepsilon).\]
Thus we have $E_{\sup}(\psi)\subseteq\hat{E}(\psi)$, where
\[
\hat{E}(\psi)=\left\{x\in[0,1):\ a_{n}(x)\geq\exp(\psi(n)(1-\varepsilon))\ \text{for infinitely many}\ n\right\}.
\]
Notice that $\psi(n)/n \to \infty$ as $n \to \infty$, then it follows \eqref{phitj} and \cite[Theorem 4.2]{WW08} that
\[\dim_{\rm H}E_{\sup}(\psi)\leq\dim_{\rm H}\hat{E}(\psi)=\frac{1}{A+1}.\]

\textbf{Lower bound:} We construct a suitable Cantor type subset of $E_{\sup}(\psi)$ in three steps. Firstly we define function
\begin{equation}\label{phs}
\theta(n)=\min\limits_{k\geq n}\psi(k)\ \ \text{for all}\ n\in\mathbb{N}.
\end{equation}
So $\theta(n)$ is well defined for $\psi(n)\to\infty$. It follows \eqref{phs} that
\begin{equation}\label{psp}
\theta(n)\leq\psi(n)\ \ \text{and}\ \ \theta(n)\leq \theta(n+1)\ \ \text{for any}\ n\geq1.
\end{equation}
Furthermore, we claim that
\begin{equation}\label{ts}
\theta(n)=\psi(n)\ \ \text{for infinitely many}\ n.
\end{equation}
If not, there exist $N\in\mathbb{N}$ such that for any $n\geq N$, $\theta(n)<\psi(n)$. In view of \eqref{psp},
\[\theta(n)<\min\limits_{k\geq n}\psi(k)\ \ \ \text{for any}\ n\geq N,\]
which contradicts to \eqref{phs}. Secondly, we define a sequence $\{d_n\}_{n\geq1}$ as follows:
\begin{equation}\label{an}
d_1=\exp \theta(1)\ \ \text{and}\ \ d_n=\min\left\{\exp \theta(n),\prod\limits_{k=1}^{n-1}d^{A-1+\varepsilon}_k\right\}\ (n\geq2).
\end{equation}
Then we easily obtain
\begin{equation}\label{an3}
\limsup\limits_{n\to\infty}\frac{\log d_{n+1}}{\log d_1+\cdots+\log d_n}\leq\limsup\limits_{n\to\infty}
\frac{\log\left(\prod\limits_{k=1}^{n-1}d^{A-1+\varepsilon}_k\right)}{\log\prod\limits_{k=1}^{n-1}d_k}=A-1+\varepsilon.
\end{equation}
By \eqref{an}, we also claim that
\begin{equation}\label{an4}
d_n=\exp \theta(n)\ \ \text{for infinitely many}\ n.
\end{equation}
If not, then there exist $N\in\mathbb{N}$ such that for any $n\geq N$,
\begin{equation}\label{an5}
\exp \theta(n)>d_n\ \ \ \text{and}\ \ d_n=\prod\limits_{k=1}^{n-1}d^{A-1+\varepsilon}_k,
\end{equation}
It follows \eqref{an5} that $d_n=d^{A+\varepsilon}_{n-1}\ (n\geq3)$ and
\begin{align}\label{zhj}
\nonumber \ &\prod\limits_{k=1}^{n}d_k=(\prod\limits_{k=1}^{N}d_k)\cdot d_{N+1}\cdot d_{N+2}\cdots d_{n}\\
\nonumber=&(\prod\limits_{k=1}^{N}d_k)\cdot(\prod\limits_{k=1}^{N}d_k)^{A-1+\varepsilon}\cdot
(\prod\limits_{k=1}^{N}d_k)^{(A-1+\varepsilon)(A+\varepsilon)}\cdots
(\prod\limits_{k=1}^{N}d_k)^{(A-1+\varepsilon)(A+\varepsilon)^{n-N-1}}\\
=&(\prod\limits_{k=1}^{N}d_k)^{1+(A-1+\varepsilon)+(A-1+\varepsilon)(A+\varepsilon)+\cdots(A-1+\varepsilon)(A+\varepsilon)^{n-N-1}}
=(\prod\limits_{k=1}^{N}d_k)^{(A+\varepsilon)^{n-N}}.
\end{align}
Combining \eqref{an5} with \eqref{zhj}, we have
\begin{equation}\label{bds}
\frac{\log \theta(n+1)}{n+1}>\frac{n+1-N}{n+1}\log(A+\varepsilon)+
\frac{\log\left(\frac{A-1+\varepsilon}{A+\varepsilon}\sum_{k=1}^{N}\log d_k\right)}{n+1},
\end{equation}
then by \eqref{psp} and \eqref{bds},
\[\liminf\limits_{n\to\infty}\frac{\log\psi(n+1)}{n+1}\geq\liminf\limits_{n\to\infty}\frac{\log \theta(n+1)}{n+1}\geq\log(A+\varepsilon)>\log A,\]
which contradicts to \eqref{phitj}. Thirdly, we chaim that
\begin{equation}\label{dn}
d_n\to\infty\ \ \text{as}\ \  n\to\infty.
\end{equation}
In fact, on one hand, by \eqref{psp} and \eqref{an}, we have
\begin{equation}\label{an2}
\limsup\limits_{n\to\infty}\frac{\log d_n}{\psi(n)}\leq1\ \text{and}\ \ d_n\leq d_{n+1}\ (n\geq2).
\end{equation}

On the other hand, by \eqref{ts} and \eqref{an4}, we obtain
\begin{equation}\label{psp3}
\limsup\limits_{n\to\infty}\frac{\log d_n}{\psi(n)}\geq1.
\end{equation}
 Now we use the sequence $\{d_n\}_{n\geq1}$ to construct the subset of $E(\psi)$.
Let $M$ be the positive integer such that $M\lfloor d_n\rfloor\geq3$ for all $n \geq 1$. Define
\[E(\{d_n\}_{n\geq1})=\{x\in[0,1):\ M\lfloor d_n\rfloor\leq a_n(x)<2M\lfloor d_n\rfloor,\ \forall\ n\geq1\}.\]
It follows \eqref{an2} and \eqref{psp3} that
\[E(\{d_n\}_{n\geq1})\subseteq E_{\sup}(\psi).\]
Let $\varepsilon\to 0^{+}$, by \eqref{an3} and Lemma \ref{flww}, we have
\[\dim_{\rm H}E_{\sup}(\psi)\geq\dim_{\rm H}E(\{d_n\}_{n\geq1})=\frac{1}{2+\limsup\limits_{n\to\infty}\frac{\log d_{n+1}}{\log d_1+\cdots+\log d_n}}\geq\frac{1}{A+1}.\]
\subsection{The proof of Theorem \ref{jxybqk}}
  \textbf{Upper bound:}
  For any $0<\varepsilon<1$, we have $E(\psi)\subseteq\bigcup_{N=1}^{\infty}E_{\psi}(N)$, where
  \[E_{\psi}(N)=
 \Big\{x\in [0,1):\ \exp((1-\varepsilon)\psi(n))\leq a_n(x)\leq\exp((1+\varepsilon)\psi(n)),\ \forall\ n\geq N\Big\}.\]

  Then we obtain
 \begin{equation}\label{3bhgx}
 \dim_{\rm H}E(\psi)
 \leq\sup\limits_{N\geq1}\{\dim_{\rm H}E_{\psi}(N)\}.
 \end{equation}
 The following we only consider  the upper bound Hausdorff dimension of $E_{\psi}(1)$ since the proof for other cases is similar. For any $n\geq 1$, let
 \begin{equation}\label{dne}
 D_{n}(\varepsilon)=\{(a_1,\cdots, a_n)\in \mathbb{N}^{n}: \exp\left((1-\varepsilon)\psi(n))
 \leq a_n(x)\leq\exp((1+\varepsilon)\psi(n)\right)\}.
 \end{equation}
 Then we deduce that
 \begin{equation}\label{3gs}
  \sharp D_{n}(\varepsilon)\leq \prod\limits_{k=1}^{n}\left(2\varepsilon\psi(k)\exp((1+\varepsilon)\psi(k))\right).
 \end{equation}
 Notice that for any $n\geq1$,
 \[E_{\psi}(1)\subseteq\bigcup\limits_{(\sigma_1,\cdots,\sigma_n)\in D_{n}(\varepsilon)}J_{n}(\sigma_1,\cdots,\sigma_n),\]
 where
 \[J_{n}(\sigma_1,\cdots,\sigma_n)=\bigcup\limits_{\sigma_{n+1}}I_{n+1}(\sigma_1,\cdots,\sigma_n,\sigma_{n+1}),\]
 where the union is taken over all $\sigma_{n+1}$ such that $(\sigma_1,\cdots,\sigma_n,\sigma_{n+1})\in D_{n+1}(\varepsilon)$. Thus we get a cover of $E_{\psi}(1), i.e., \{J_n(\sigma_1,\cdots,\sigma_n):\ (\sigma_1,\cdots,\sigma_n)\in D_{n}(\varepsilon)\}$. For any $(\sigma_1,\cdots,\sigma_n)\in D_{n}(\varepsilon)$,
 \begin{eqnarray}\label{4cd}
 \nonumber |J_{n}(\sigma_1,\cdots,\sigma_n)|&\leq& \sum\limits_{\sigma_{n+1}\geq \exp((1-\varepsilon)\psi(n+1))}|I_{n+1}(\sigma_1,\cdots, \sigma_n, \sigma_{n+1})|\\
 \nonumber&\leq& \left(\sum\limits_{\sigma_{n+1}\geq \exp((1-\varepsilon)\psi(n+1))}\frac{1}{\sigma^{2}_{n+1}}\right)\cdot
 \exp\left(-2(1-\varepsilon)\sum\limits_{k=1}^{n}\psi(k)\right)\\
 &\leq&\exp\left(-(1-\varepsilon)\psi(n+1)-2(1-\varepsilon)\cdot\sum\limits_{k=1}^{n}\psi(k)\right).
 \end{eqnarray}
 Combining \eqref{3gs} with \eqref{4cd}, we have
 \[\dim_{\rm H}E_{\psi}(1)\leq\liminf\limits_{n\to\infty}\frac{\log\sharp D_n(\varepsilon)}{-\log|J_{n}(\sigma_1,\cdots,\sigma_n)|}\leq\frac{1+\varepsilon}{1-\varepsilon}\cdot
 \frac{1}{2+\limsup\limits_{n\to\infty}\frac{\psi(n+1)}{\psi(1)+\cdots+\psi(n)}}.\]
 Let $\varepsilon\to0^{+}$, together with \eqref{3bhgx}, we obtain
 \[\dim_{\rm H}E(\psi)\leq\frac{1}{2+\limsup\limits_{n\to\infty}\frac{\psi(n+1)}{\psi(1)+\cdots+\psi(n)}}.\]
 \textbf{Lower bound:} Let $M$ be the positive integer such that $M \lfloor \exp\psi(n)\rfloor\geq3$ for any $n\geq1$. Define
\[\mathcal{E}(\psi)=\Big\{x\in[0,1): M \lfloor \exp\psi(n)\rfloor\leq a_n(x)<2M \lfloor \exp\psi(n)\rfloor, \forall\ n\geq1\Big\}.\]
It is easy to verify that
\[\mathcal{E}(\psi)\subseteq E(\psi).\]
 By Lemma \ref{flww}, We conclude that  \[\dim_{\rm H}E(\psi)\geq\dim_{\rm H}\mathcal{E}(\psi)=\frac{1}{2+\limsup\limits_{n\to\infty}\frac{\psi(n+1)}{\psi(1)+\cdots+\psi(n)}}.\]
\subsection{The proof of Theorem \ref{xjx}}
\textbf{ Upper bound:} We can use the same proof in \cite[Theorem 2.8]{FS19}, then
\[\dim_{\rm H}E_{\inf}(\psi)\leq\frac{1}{B+1}.\]
\textbf{ Lower bound:} It is trivial for $B=\infty$ and thus we always assume that $1 \leq B <\infty$.
Since \[\limsup\limits_{n\to\infty}\frac{\log\psi(n)}{n}=\log B,\]
for any $\varepsilon>0$, we have $\psi(n) \leq (B+\varepsilon/2)^n$ for $n$ large enough. This implies
\begin{equation}\label{qy0}
 \psi(n)(B+\varepsilon)^{j-n} \leq (B+\varepsilon/2)^n(B+\varepsilon)^{j-n} \to 0 \ \ (n \to \infty).
\end{equation}
We define a sequence $\{L_j\}_{j\geq1}$ as follows:
\begin{equation}\label{tidy}
 L_j=\sup\limits_{n\geq j}\{\exp\left(\psi(n)(B+\varepsilon)^{j-n}\right)\},\ j=1,2,\cdots.
\end{equation}
 It is easy to check that $L_{j+1}\leq L^{B+\varepsilon}_{j}$ and then we deduce that
 \begin{equation}\label{tigx}
 \log L_{n+1}-\log L_1\leq(B+\varepsilon-1)\sum\limits_{j=1}^{n}\log L_j.
 \end{equation}
 In view of \eqref{tidy}, we claim that
\begin{equation}\label{tixjx}
 \liminf\limits_{n\to\infty}\frac{\log L_n}{\psi(n)}=1.
 \end{equation}
Indeed, on one hand, we get $L_j \geq\exp\psi(j)$ for all $j \geq 1$ by the definition of $L_j$, and hence
\[
\liminf\limits_{n\to\infty}\frac{\log L_n}{\psi(n)}\geq1.
\]
On the other hand, in view of \eqref{qy0}, the supremum in \eqref{tidy} is achieved, we denote by $t_j \geq j$ the smallest number for which
\[L_j = \exp\left(\psi(t_j)(B+\varepsilon)^{j-t_j}\right).\] Observe that for many consecutive j's, the number $t_j$ will be the same. More precisely, $t_j=t_{j+1}=\cdots=t_{t_j}$.
Let $\{\ell_k\}_{k\geq1}$ be a strictly increasing sequence of $\{t_j\}_{j\geq1}$. Then we obtain $L_{\ell_k} = \exp\psi(\ell_k)$ and thus
\[
\liminf\limits_{n\to\infty}\frac{\log L_n}{\psi(n)}\leq  \liminf\limits_{k\to\infty}\frac{\log L_{\ell_k} }{\psi(\ell_k)}  =1.
\]
Now we use the sequence $\{L_j\}_{j\geq1}$ to construct the subset of $E_{\inf}(\psi)$. Let $M$ be the positive integer such that $M\lfloor L_n\rfloor\geq3$ for all $n \geq 1$. Define
\[
E_{\inf}(\{L_n\}_{n\geq1})=\big\{x\in [0,1): M\lfloor L_n\rfloor\leq a_n(x)<2M\lfloor L_n\rfloor, \forall\ n\geq1\big\}.
\]
It is easy to verify that
\[E_{\inf}(\{L_n\}_{n\geq1})\subseteq E_{\inf}(\psi).\]
Let $\varepsilon\to 0^{+}$, combining \eqref{tigx} with Lemma \ref{flww}, we have
\[\dim_{\rm H}E_{\inf}(\psi)\geq\dim_{\rm H}E_{\inf}(\{L_n\}_{n\geq1})=\frac{1}{2+\limsup\limits_{n\to\infty}\frac{\log L_{n+1}}{\log L_1+\cdots+\log L_n}}\geq\frac{1}{B+1}.\]

\textbf{Acknowledgement:} The authors would like to thank Professor Lingmin Liao for his invaluable comments. This research was supported by National Natural Science Foundation of China (11771153, 11801591, 11971195) and Fundamental Research Funds for the Central Universities SYSU-18lgpy65. Kunkun Song would like to thank China Scholarship Council (CSC) for ﬁnancial support (201806270091).

\section*{Reference}

\end{document}